\numberwithin{equation}{section}
\newtheorem{thm}{Theorem}[section]
  \theoremstyle{plain}
  \newtheorem{lem}[thm]{Lemma}
  \theoremstyle{plain}
  \theoremstyle{plain}
  \theoremstyle{remark}
  \theoremstyle{definition}
\newcommand{\R}{\mathbb{R}^{N}}
\newcommand{\loc}{{\rm loc}}
\newcommand{\Int}{\displaystyle \int}
\newcommand{\dint}{\displaystyle\int_{\mathbb{R}^{2N}}}
\newcommand{\Frac}{\displaystyle \frac}
\newcommand{\Lim}{\displaystyle \lim}
\renewcommand{\le}{\leqslant}
\renewcommand{\ge}{\geqslant}
\renewcommand{\leq}{\leqslant}
\renewcommand{\geq}{\geqslant}
\title[Fractional $p$-Laplacian problems with weight]{On fractional $p$-Laplacian problems with weight}
\author[R.\ Lehrer]{Raquel Lehrer}
\address{Centro de Ciencias Exatas e Tecnologicas 
\newline\indent 
CCET, Unioeste
\newline\indent
Cascavel-PR, Brazil}
\email{rlehrer@gmail.com}
\author[L.A.\ Maia]{Liliane A. Maia}
\address{Departamento de Matematica
\newline\indent 
Universidade de Brasilia
\newline\indent
Brasilia, Brazil}
\email{lilimaia@unb.br}
\author[M.\ Squassina]{Marco Squassina}
\address{Dipartimento di Informatica
\newline\indent
Universit\`a degli Studi di Verona
\newline\indent
Verona, Italy}
\email{marco.squassina@univr.it}
\subjclass[2000]{34A08, 35Q40, 58E05}
\keywords{$p$-Fractional laplacian, loss of compactness, problems with weight.}
\thanks{The first author is supported by CCET/UNIOESTE. 
The second author is supported by CNPqPQ 306388/2011-1, PROEX/CAPES and FAPDF.
The third author is supported by MIUR project:
   ``{\em Variational and Topological Methods in the Study of Nonlinear Phenomena}''. 
   The work was partially carried out during a stay of Marco Squassina in Brasilia. 
   He would like to express his gratitude to the Departamento de Matem\'atica
   for the warm hospitality.}
\begin{document}

\begin{abstract}
We investigate the existence of nonnegative solutions for a nonlinear problem
involving the fractional $p$-Laplacian operator. The problem is set on a unbounded
domain, and compactness issues have to be handled.
\end{abstract}

\maketitle




\section{Introduction}
The interest for the fractional Laplacian operator $(-\Delta)^s$ and more generally pseudodifferential operators, 
has constantly increased over the last few years, although such operators have been a classical topic of functional analysis since long ago.
Nonlocal operators such as $(-\Delta)^s$ naturally arise
in continuum mechanics, phase transition phenomena,
population dynamics and game theory, as they are the typical outcome of stochastical stabilization of L\'evy processes, see \cite{L,C,MK1} and the references therein. 
We refer the reader to \cite{DiNezza} and to the reference included for a selfcontained overview of the basic properties of fractional Sobolev spaces.
If $\Omega$ is a smooth bounded domain, for semi-linear problems like
\begin{equation*}
\begin{cases}
(-\Delta)^s u = f(x,u), &  \text{in $\Omega$,}  \\
u=0  &  \text{in $\R\setminus\Omega$,} 
\end{cases}
\end{equation*}
existence, nonexistence, regularity and maximum principles have been intensively investigated, see \cite{SV, SV3,CS,RS,RS1,RS2,CS1, CS2}
and the references therein. When $\Omega=\R$, we refer the reader to \cite{FQT,CW} where weak solutions in $H^s(\R)$ are studied.
More recently, for $p>1$, $s\in (0,1)$ and $N>sp$, motivated by some situations arising in game theory,
a nonlinear generalization of this operator has been introduced, see \cite{C,BC}. Precisely, for smooth functions $u$ define
\begin{equation*}
(- \Delta)_p^s\, u(x) := 2\, \lim_{\varepsilon \searrow 0} \int_{\R \setminus B_\varepsilon(x)} \frac{|u(x) - u(y)|^{p-2}\, (u(x) - u(y))}{|x - y|^{N+sp}}\, dy, \quad x \in \R.
\end{equation*}
This nonlinear operator is consistent, up to some normalization constant depending upon $n$ and $s$, 
with the linear fractional Laplacian $(-\Delta)^s$ in the case $p=2$. A broad range of existence and multiplicity results
for the problem
\begin{equation*}
\begin{cases}
(-\Delta)^s_p u = f(x,u), &  \text{in $\Omega$,}  \\
u=0  &  \text{in $\R\setminus\Omega$,} 
\end{cases}
\end{equation*}
has been recently obtained in \cite{ILPS} via tools of Morse theory under different growth assumptions for $f(x,u)$. 
We refer to \cite{FP,LL,IS} for the case $f(x,u)=\lambda |u|^{p-2}u$ and the study of properties of (variational) nonlinear eigenvalues,
including their asymptotic behaviour.
\vskip1pt
\noindent
In this paper, we are concerned with existence of solutions of
\begin{equation}
\begin{cases}
(-\Delta)^{s}_{p} u = \varphi(x)f(u), \quad  \text{in $\R$,} \label{problem} \\
\noalign{\vskip3pt}
u\geq 0, \,\,\, u\neq 0, &
\end{cases}
\end{equation}
In the local case, formally $s=1$, necessary and sufficient conditions for the solvability of 
the problem $-\Delta u=\varphi(x) u^q$ in $\R$  with $0<q<1$ were investigated in \cite{kamin}, see also \cite{brenir}.
Under some sign condition on $\varphi$ the problem with $s=1$ and $p>1$, which thus involves
the $p$-Laplace operator $\Delta_p={\rm div}(|\nabla u|^{p-2}\nabla u)$
was investigated in \cite{ACM}, see also \cite{olimpio}. 
If $F(u):= \int_{0}^{u}f(s)ds,$ the (formal) Poho\v{z}aev identity for solutions $u\in W^{s,p}(\R)$ 
of problem \eqref{problem} is
\begin{equation}
\label{poho}
\int_{\R}\big((N-sp)\varphi(x) f(u)u-pN\varphi(x)F(u)-px\cdot\nabla\varphi(x)F(u)\big)=0.
\end{equation}
A rigorous justification of \eqref{poho} for $p\neq 2$ is still unavailable
due to the lack of suitable regularity results, while in the case $p=2$, \eqref{poho} has been recently proved in 
\cite{RS1}, see also \cite{RS2,CW}. For the case $f(u)=u^q$, the identity yields nonexistence of solutions $u\in W^{s,p}(\R)$ provided that 
$$
\text{$x\mapsto (N-sp)\varphi(x)-\frac{pN}{q+1}\varphi(x)-\frac{p}{q+1}x\cdot\nabla\varphi(x)$ \, has fixed sign in $\R$}.
$$
Then, in particular case where $\varphi$ is constant 
$u=0$ as soon as  $q \neq  p^{*}_s - 1$, where we set
$$
p^*_s:=\frac{Np}{N-sp}.
$$
Hence, in general, it is natural to impose conditions
on $\varphi$ in order to obtain nontrivial solutions.
\vskip3pt
\noindent
We will assume that $p>1$, $\varphi \in L^{\infty}_{\loc}(\R)$ and $f\in C({\mathbb R}^+)$ satisfies the following conditions:
\begin{itemize}
\item[$(f_1)$] $f(s)\geq 0$,\quad\text{for all $s\geq 0$};
\item[$(f_2)$] $\mu s^{q} \leq f(s)\leq c s^{q}$ ,\quad\text{for all $s\geq 0$, some $p-1 < q < p^{*}_{s} - 1$ and $c,\mu>0$};
\item[$(f_3)$] there exists $m<p$ such that 
\begin{align*}
& 0 \leq (q+1)F(s) - f(s)s \leq C s^{m},  \quad\text{for all $s\geq 0$ and some $C>0$}; \\
& 0 \leq f(s)s - pF(s)\leq C s^{q+1} ,\quad\text{for all $s\geq 0$}.
\end{align*}
\item[(W)] $\displaystyle\sup_{\R\setminus\Omega}\varphi \leq 0 < \displaystyle\inf_{\omega}\varphi$ 
for some bounded domains $\omega,\Omega \subset \R$ with $\omega \subset \Omega$.
\end{itemize}
In addition to $f(s):=s^q$ for $s\geq 0$, another example of nonlinearity satisfying $(f_1)$-$(f_3)$ is
$$ 
f(s):= 
\begin{cases}
2s^{q}, & 0 \leq s \leq 1, \\
s^{q} + s^{m-1}, & s \geq 1,
\end{cases}
\qquad m<p<q+1.
$$

\noindent
The main result of the paper is the following:
\begin{thm}
\label{main}
 Assume that $(W)$ and $(f_1)$-$(f_3)$ hold. Then problem $\eqref{problem}$ has a
 distributional solution, namely there exists a function $u\in L^{Np/(N-sp)}(\R)\setminus\{0\}$ with $u\geq 0$,
$$
\dint\Frac{|u(x)-u(y)|^{p}}{|x-y|^{N+sp}}<\infty,
$$
and
\begin{equation*}
\dint\Frac{|u(x) - u(y)|^{p-2}(u(x)-u(y))(\psi(x)-\psi(y))}{|x-y|^{N+sp}} 
= \Int_{\R}\varphi(x)f(u)\psi, 
\end{equation*}
for every $\psi \in C^\infty_c(\R)$. The same holds 
if $(W)$ and $(f_1)$ hold and $(f_2)$ holds with $0 \leq q < p-1$.
\end{thm}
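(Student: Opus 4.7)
The plan is to attack \eqref{problem} variationally on $W^{s,p}(\R)$ via the $C^1$ functional
\[
I(u) := \frac{1}{p}\dint \Frac{|u(x)-u(y)|^p}{|x-y|^{N+sp}}\, dx\, dy - \Int_{\R}\varphi(x) F(u^+)\, dx,
\]
after extending $f$ by zero on $(-\infty,0]$ so that a critical point is automatically nonnegative (test against $u^-$). Growth $(f_2)$ and the fractional Sobolev embedding $W^{s,p}(\R)\hookrightarrow L^{p^*_s}(\R)$ make $I$ of class $C^1$, and any nontrivial critical point yields a distributional solution of \eqref{problem}.

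For the superlinear range $p-1<q<p^*_s-1$, I would apply the mountain pass theorem. By $(W)$ fix a nonnegative $e_0\in C^\infty_c(\omega)$; the lower bound $f(s)\geq \mu s^q$ with $q+1>p$ forces $I(te_0)\to-\infty$, while the upper bound $f(s)\leq c s^q$ and Sobolev embedding yield $I(u)\geq\alpha>0$ on a small sphere. This produces a Palais--Smale sequence $\{u_n\}$ at the mountain-pass level $c>0$. For boundedness in $W^{s,p}(\R)$, compute
\[
(q+1)\,I(u_n)-\langle I'(u_n),u_n\rangle=\Bigl(\tfrac{q+1}{p}-1\Bigr)[u_n]_{s,p}^{p}-\Int_{\R}\varphi(x)\bigl((q+1)F(u_n)-f(u_n)u_n\bigr)dx;
\]
$(f_3)$ gives $(q+1)F(u_n)-f(u_n)u_n\ge 0$, so on $\R\setminus\Omega$ that piece of the integral is $\leq 0$ (since $\varphi\leq 0$ there), while on the bounded set $\Omega$ the first part of $(f_3)$ together with Sobolev embedding bounds it by $C[u_n]_{s,p}^{m}$ with $m<p$. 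Since $q+1>p$, $\{u_n\}$ is therefore bounded.

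Extracting $u_n\rightharpoonup u$, lower semicontinuity and local compactness identify $u$ as a distributional solution; the real difficulty is to show $u\neq 0$, which is where loss of compactness on the unbounded domain really bites. Assume $u\equiv 0$ and invoke the Lions-type dichotomy in $W^{s,p}(\R)$. If vanishing holds, then $u_n\to 0$ in $L^{q+1}(\R)$ and $(f_2)$ forces $\int\varphi F(u_n)\to 0$ and $\int\varphi f(u_n)u_n\to 0$, so $[u_n]_{s,p}^{p}\to 0$ via $\langle I'(u_n),u_n\rangle\to 0$, contradicting $c>0$. Otherwise there are $y_n\in\R$ and $\delta>0$ with $\int_{B_1(y_n)}|u_n|^{q+1}\geq\delta$; the translates $w_n(x):=u_n(x+y_n)$ have a nontrivial weak limit $w$ by local compactness, and $|y_n|\to\infty$ (since $u_n\rightharpoonup 0$ rules out bounded $\{y_n\}$). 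For any $\psi\in C^\infty_c(\R)$, $\operatorname{supp}(\psi)+y_n\subset\R\setminus\Omega$ eventually, so by $(W)$ $\varphi(\cdot+y_n)$ converges on $\operatorname{supp}(\psi)$ to some $\psi_\infty\leq 0$. Passing to the limit in $\langle I'(u_n),\psi(\cdot-y_n)\rangle=o(1)$ shows that $w$ solves $(-\Delta)^s_p w=\psi_\infty f(w)$ weakly; testing against $w$ itself gives $[w]_{s,p}^{p}\leq 0$, hence $w\equiv 0$, a contradiction. Therefore $u\neq 0$.

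For the sublinear range $0\leq q<p-1$, direct minimization applies: $q+1<p$ makes $I$ coercive on $W^{s,p}(\R)$, and the same test function $t e_0$ with $t>0$ small gives $I(te_0)<0$, so $m:=\inf I<0$. For a minimizing sequence $\{u_n\}$, the same vanishing/concentration dichotomy together with $(W)$ excludes nontrivial mass escape at infinity (because the limit problem inherits a nonpositive weight and admits only the zero nonnegative solution), so lower semicontinuity of the Gagliardo seminorm and local strong $L^{q+1}$-convergence yield $I(u)\leq m$ for the weak limit $u$. Thus $u$ is a minimizer, and since $m<0$ it is nontrivial.
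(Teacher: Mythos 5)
Your plan runs into two genuine obstructions, the first of which is exactly the reason the paper does \emph{not} work with a single functional on the whole space. The weight is only assumed to satisfy $\varphi\in L^{\infty}_{\loc}(\R)$ together with the sign condition $(W)$; it may be unbounded (very negative) at infinity, so for a general $u\in W^{s,p}(\R)$ the term $\int_{\R}\varphi(x)F(u^{+})$ need not be finite (the part over $\R\setminus\Omega$ can be $-\infty$). Hence your $I$ is not real-valued, let alone $C^1$, on $W^{s,p}(\R)$, and the mountain pass theorem, the Palais--Smale machinery and the direct minimization in the sublinear case cannot be launched; the claim that ``$(f_2)$ and the Sobolev embedding make $I$ of class $C^1$'' silently uses a global bound on $\varphi$ that is not assumed. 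Related to this, the natural space here is $D^{s,p}(\R)$ rather than $W^{s,p}(\R)$: all your coercivity/boundedness estimates control only the Gagliardo seminorm, never the $L^p$ norm, and indeed the paper points out that the solution need not belong to $L^{p}(\R)$. The paper circumvents both issues by working with the functionals $J_n$ on the spaces $X_n$ of functions vanishing outside balls $B(0,R_n)$ (where $\varphi$ is bounded and the compact embedding gives the $(PS)$ condition), producing nonnegative mountain-pass solutions $u_n$ with levels $r\le c_n\le c_1$, proving bounds uniform in $n$ via $(f_3)$ and $(W)$, and then passing to the weak limit in $D^{s,p}(\R)$, identifying it as a distributional solution by local compactness and the $(p-1)$-power duality argument.

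The second gap is in your nontriviality argument. In the nonvanishing case you translate by $y_n$ with $|y_n|\to\infty$ and assert that $\varphi(\cdot+y_n)$ converges on compact sets to some limit weight $\psi_\infty\le 0$; nothing in $(W)$ or $L^\infty_{\loc}$ yields any convergence (or even local uniform boundedness) of these translates, so the ``limit equation'' for $w$ is not available, and the Lions dichotomy step for $L^{q+1}$ also tacitly uses a global bound on $\varphi$ to convert vanishing into $\int\varphi F(u_n)\to 0$. The paper's argument is both simpler and avoids any control of $\varphi$ at infinity beyond its sign: using the second inequality in $(f_3)$, namely $0\le f(s)s-pF(s)\le Cs^{q+1}$, together with $(W)$, one gets $c_n\le \int_{\Omega}\varphi(x)\bigl(\tfrac1p f(u_n)u_n-F(u_n)\bigr)$, an integral over the \emph{fixed bounded} set $\Omega$. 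If the weak limit were $u\equiv 0$, strong local convergence and dominated convergence would force this right-hand side to $0$, contradicting the uniform lower bound $c_n\ge r>0$ for the min-max levels. This is the key device you are missing; without it (or an extra hypothesis on the behaviour of $\varphi$ at infinity) the concentration--compactness route does not close. The same well-definedness and seminorm-versus-norm problems affect your sublinear paragraph, which the paper instead treats by minimizing each $J_n$ on $X_n$ and using a uniform negative upper bound on $\inf_{X_n}J_n$.
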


\noindent
We point out that the result is new also for the semi-linear case $p=2$, $1<q<2^*_s-1$ and $N\geq 2$, establishing existence 
of a nonnegative distributional solution $u\in D^{s,2}(\R)$ for 
\begin{equation*}
(-\Delta)^{s} u = \varphi(x)f(u) \quad  \text{in $\R$.} 
\end{equation*}
In general, it is not guaranteed that the {\em distributional} solution $u$ of Theorem~\ref{main} 
belongs to the fractional space $W^{s,p}(\R)$, that is $u\not\in L^p(\R)$ might occur.
Moreover, if the solution $u\geq 0$ of Theorem~\ref{main} is a {\em weak} solution to \eqref{problem}, as pointed
out in \cite[Proposition 2.2]{ILPS} (see also \cite[Theorem A.1]{Brasco} holding for supersolutions
for further details), actually $u>0$ on $\R$.
The proof of Theorem~\ref{main} follows the pattern of \cite{ACM}, namely nontrivial nonnegative
solutions $u_n$ are constructed for the problem defined on a sequence of balls $B(0,R_n)\subset\R$ with $u_n=0$
on $\R\setminus B(0,R_n)$, with $R_n\nearrow\infty$ as $n\to\infty$. Then, relying on uniform 
estimates, the sequence is shown to converge weakly to a nontrivial distributional solution to \eqref{problem}.
Both in getting uniform estimates and in proving the nontriviality of the weak limit, the fact that $\varphi(x)\leq 0$
outside a bounded domain of $\R$ plays a crucial role. 

\section{Preliminary results}
\noindent
The space $D^{s,p}(\R)$ is defined by 
$$
D^{s,p}(\R) := \left\lbrace u \in L^{\frac{Np}{N-sp}}(\R): \, \|u\|_{D^{s,p}} < \infty \right\rbrace, \,\quad
\|u\|_{D^{s,p}}:= \Big(\dint\Frac{|u(x)-u(y)|^{p}}{|x-y|^{N+sp}}\Big)^{1/p}.
$$
Endowed with the norm $\|\cdot\|_{D^{s,p}}$ the space $D^{s,p}(\R)$ is a uniformly convex Banach space. From \cite[Theorem 6.5]{DiNezza}, 
we know that there exists a positive constant $C$ such that
\begin{equation}
\label{fractsob}
\|u \|_{L^{p^*_s}(\R)} \leq C \|u \|_{D^{s,p}},\,\,\quad\text{for every $u\in  D^{s,p}(\R)$},
\end{equation}
and $D^{s,p}(\R)$ is embedded into $L^{q}_{{\rm loc}}(\R)$, for every $1\leq q \leq p^*_s$. 
We observe that, in general, the integral $\varphi F(u)$ may not belong to $L^1(\R)$ for $\varphi \in L^{\infty}_{\loc}(\R)$.
Hence, we shall consider a sequence of diverging radii $R_n>0$ and the spaces 
$$
X_n :=\big\{u \in W^{s,p}(\R): u=0 \ \text{on} \ \R\setminus B(0,R_n)\big\}
$$ 
endowed with the norm 
\begin{equation}
\label{norma}
\|u\|_{X_n}:=\|u\|_{D^{s,p}},\quad \,\, u\in X_n,
\end{equation}
and the functionals $J_n: X_n \rightarrow \mathbb{R}$ given by
$$
J_n(u):=\Frac{1}{p}\dint\Frac{|u(x)-u(y)|^{p}}{|x-y|^{N+sp}} -\Int_{B(0,R_n)}\varphi(x)F(u^{+}), \quad u\in X_n.
$$
We stress that, by means of \eqref{fractsob} and H\"older inequality,
the norm defined in \eqref{norma} is equivalent (with constants depending on the value of $n$)
to the standard norm in $W^{s,p}(\R)$, namely $\|u\|_{W^{s,p}}=(\|u\|_p^p+\|u\|_{D^{s,p}}^p)^{1/p}$.
We can check that $J_n \in C^{1}(X_n, \mathbb{R})$ and, for $u,v \in X_n$, 
$$
J'_{n}(u)(v) = \dint \Frac{|u(x)-u(y)|^{p-2}(u(x)-u(y))(v(x)-v(y))}{|x-y|^{N+sp}} - \Int_{B(0,R_n)}\varphi(x)f(u^{+})v.
$$
The truncation with $u^+:=\max\{u,0\}$ in the nonlinearity will allow critical points of $J_n$ be automatically nonnegative,
see Lemma~\ref{segno}.

\vskip4pt
\noindent
Without loss of generality, we may assume that all the balls $B(0,R_n)$ contain the domain $\Omega$ 
for each $n\geq 1$ large enough.

\begin{lem}
\label{lowerw}
For every $n\geq 1$ the functional $J_n$ is weakly lower semi-continuous on $X_n$.
\end{lem}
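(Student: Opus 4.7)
The plan is to decompose $J_n = A_n - B_n$ with
$$
A_n(u) := \frac{1}{p}\dint\frac{|u(x)-u(y)|^{p}}{|x-y|^{N+sp}},\qquad
B_n(u) := \Int_{B(0,R_n)}\varphi(x)F(u^{+}),
$$
show that $A_n$ is weakly lower semi-continuous on $X_n$, and show that $B_n$ is in fact \emph{weakly continuous}; the sum of a wlsc and a weakly continuous functional is then wlsc.

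For $A_n$, recall that $(u,v)\mapsto\|\cdot\|_{D^{s,p}}$ is a norm on $X_n$ (since functions in $X_n$ vanish outside the bounded set $B(0,R_n)$, the Gagliardo seminorm controls the full $W^{s,p}$ norm up to a constant). Therefore $A_n(u)=\frac{1}{p}\|u\|_{X_n}^{p}$ is the $p$-th power of a norm, hence convex and continuous, and consequently weakly lower semi-continuous. This step is entirely standard.

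For $B_n$, take a sequence $u_k\rightharpoonup u$ in $X_n$. Under hypothesis $(f_2)$, regardless of whether $p-1<q<p^{*}_s-1$ or $0\le q<p-1$, integration gives $|F(t)|\le C|t|^{q+1}$ with $q+1<p^{*}_s$. Since every element of $X_n$ has support in $B(0,R_n)$, the embedding
$$
X_n \hookrightarrow L^{q+1}(B(0,R_n))
$$
is compact (this follows from the compact fractional Sobolev embedding into $L^{r}_{\loc}(\R)$ recalled after \eqref{fractsob}, applied on the bounded set $B(0,R_n)$). Therefore $u_k\to u$ strongly in $L^{q+1}(B(0,R_n))$, and up to a subsequence $u_k\to u$ a.e.\ in $B(0,R_n)$ with $|u_k|\le g$ for some $g\in L^{q+1}(B(0,R_n))$. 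Since $F$ is continuous and $t\mapsto t^+$ is continuous, $\varphi(x)F(u_k^+(x))\to \varphi(x)F(u^+(x))$ a.e.\ on $B(0,R_n)$, and the estimate
$$
|\varphi(x)F(u_k^+(x))|\le C\,\|\varphi\|_{L^{\infty}(B(0,R_n))}\,|g(x)|^{q+1}\in L^{1}(B(0,R_n))
$$
allows the dominated convergence theorem to pass to the limit and obtain $B_n(u_k)\to B_n(u)$. A subsequence principle then upgrades this to convergence of the full sequence, proving weak continuity of $B_n$.

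The only potentially delicate point is the compact embedding $X_n\hookrightarrow L^{q+1}(B(0,R_n))$ for functions vanishing outside $B(0,R_n)$; but this is exactly the content of the local compact embedding of $D^{s,p}(\R)$ into $L^{r}_{\loc}(\R)$ for $1\le r<p^{*}_s$, which was quoted right after \eqref{fractsob}. Combining the wlsc of $A_n$ with the weak continuity of $B_n$ yields $\liminf_k J_n(u_k)\ge J_n(u)$ for every weakly convergent sequence $u_k\rightharpoonup u$ in $X_n$, which is the claim.
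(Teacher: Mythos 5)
Your proof is correct and follows essentially the same route as the paper: weak lower semicontinuity of the Gagliardo term as (a power of) the norm, and weak continuity of the $\varphi F(u^+)$ term via the compact embedding on $B(0,R_n)$, a.e.\ convergence, the growth bound from $(f_2)$, dominated convergence and a subsequence argument. Your handling of the dominating function $g\in L^{q+1}$ is in fact slightly more careful than the paper's wording; the only nominal discrepancy is that the compactness you invoke is the one from \cite[Corollary 7.2]{DiNezza} cited in the paper's proof, not the continuous local embedding recalled after \eqref{fractsob}.
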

\begin{proof}
If $(u_j)\subset X_n$ converges weakly to some $u$ in $X_n$ as $j\to\infty$, we have
$$
\dint\Frac{|u(x)-u(y)|^{p}}{|x-y|^{N+sp}}\leq \liminf_{j\to\infty}  \dint\Frac{|u_j(x)-u_j(y)|^{p}}{|x-y|^{N+sp}}.
$$
Since $(u_j)$ is bounded in $L^p(B(0,R_n))$ via inequality \eqref{fractsob},
the compact embedding theorem for fractional Sobolev spaces \cite[Corollary 7.2]{DiNezza}
implies that, up to a subsequence, the sequence $(u_j)$ converges strongly to $u$ in $L^r(B(0,R_n))$, for every $1\leq r<p^*_s$
and $u_j(x)\to u(x)$ for a.e.\ $x\in\R$. In turn, since by 
condition $(f_2)$ there exists a positive constant $C_n>0$ with
$$
|\varphi(x)F(u_j^+)|\chi_{B(0,R_n)}\leq C_n |u_j|^{q+1},\quad (q+1<p^*_s),
$$
we get by the Dominated Convergence theorem that 
$$
\lim_{j\to\infty}\Int_{B(0,R_n)}\varphi(x)F(u^{+}_j)=\Int_{B(0,R_n)}\varphi(x)F(u^{+}).
$$
This concludes the proof.
\end{proof}

\begin{lem}
\label{segno}
If $J'_{n}(u) = 0$, for $u\in X_n$. Then $u\geq 0$.
\end{lem}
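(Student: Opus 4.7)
The plan is to test the equation $J'_n(u)=0$ against the admissible function $v=-u^-\in X_n$ (where $u^-=\max\{-u,0\}$), and extract from this the inequality $\|u^-\|_{D^{s,p}}=0$.

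First I would verify that $v=-u^-$ is indeed in $X_n$: since $u=0$ outside $B(0,R_n)$, so is $u^-$, and $u^-\in W^{s,p}(\R)$ because we have the pointwise estimate $|u^-(x)-u^-(y)|\leq |u(x)-u(y)|$, giving $\|u^-\|_{D^{s,p}}\leq \|u\|_{D^{s,p}}$. Next, note that the weight term in $J'_n(u)(-u^-)$ vanishes identically: at every point one has $f(u^+)u^-=0$, because $u^+$ and $u^-$ have disjoint supports and $f(0)=0$ is not even needed since $u^-=0$ wherever $u^+\neq 0$.

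The heart of the argument is the pointwise inequality
\begin{equation*}
|a-b|^{p-2}(a-b)\bigl(b^- - a^-\bigr) \geq |a^- - b^-|^p \quad \text{for all } a,b\in\mathbb{R}.
\end{equation*}
I would verify this by checking the four sign cases for $(a,b)$: if both $a,b\geq 0$ both sides vanish; if both $a,b\leq 0$ both sides equal $|a-b|^p$; and in the mixed cases, say $a\geq 0\geq b$, one has $a-b\geq -b= b^-\geq 0$, so $|a-b|^{p-1}(-b)\geq (-b)^{p-1}(-b)=(b^-)^p=|a^--b^-|^p$, with the symmetric case handled identically. Applying this with $a=u(x)$, $b=u(y)$ and dividing by $|x-y|^{N+sp}$, integration yields
\begin{equation*}
\dint\Frac{|u(x)-u(y)|^{p-2}(u(x)-u(y))\bigl(u^-(y)-u^-(x)\bigr)}{|x-y|^{N+sp}} \,\geq\, \|u^-\|_{D^{s,p}}^{p}.
\end{equation*}

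Putting the two observations together, $0=J'_n(u)(-u^-)\geq \|u^-\|_{D^{s,p}}^p$, so $\|u^-\|_{D^{s,p}}=0$, which forces $u^-$ to be constant; since $u^-$ vanishes outside $B(0,R_n)$, the constant is zero, and therefore $u=u^+\geq 0$. The only step that requires any care is the verification of the elementary four-case inequality above; everything else is routine algebraic manipulation.
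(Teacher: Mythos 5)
Your proof is correct and is essentially the paper's own argument: you test $J_n'(u)$ with $-u^-\in X_n$, invoke the same elementary inequality $|\xi^--\eta^-|^p\le|\xi-\eta|^{p-2}(\xi-\eta)(\eta^--\xi^-)$ to bound the nonlocal term from below by $\|u^-\|_{D^{s,p}}^p$, and conclude $u^-=0$ from the vanishing of the Gagliardo seminorm together with $u^-\equiv 0$ outside $B(0,R_n)$. One small correction to a side remark: at points where $u<0$ one has $f(u^+)u^-=f(0)\,u^-$, so the vanishing of the weight term does rely on $f(0)=0$ (which follows from $(f_2)$ when $q>0$), and the disjoint-support observation alone only covers the points where $u^+\neq 0$.
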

\begin{proof}
Observe first that if $u\in X_n$, then $u^\pm\in X_n$, where $u^\pm:=\max\{\pm u,0\}.$ We have  
\begin{equation}
\label{zerononlin}
\int_{B(0,R_n)}\varphi(x)f(u^{+})u^{-} = 0.
\end{equation}
 We recall the elementary inequality
\[
|\xi^--\eta^-|^p\le|\xi-\eta|^{p-2}(\xi-\eta)(\eta^--\xi^-),\quad\text{for every $\xi,\eta\in {\mathbb R}$}.
\]
Then, recalling \eqref{zerononlin}, by testing $J_n'$ with $-u^-\in X_n$ yields
\begin{align*}
0=J_n'(u)(-u^-)&= \dint \Frac{|u(x)-u(y)|^{p-2}(u(x)-u(y))(u^-(y)-u^-(x))}{|x-y|^{N+sp}} \\
& \geq  \dint\Frac{|u^-(x)-u^-(y)|^{p}}{|x-y|^{N+sp}}.
\end{align*}
This implies that $u^-$ is constant in $\R$ and since $u^-$ vanishes outside $B(0,R_n)$,
it follows that $u^{-}=0$. Hence, $u\ge 0$ a.e., concluding the proof.
\end{proof}

\noindent
In the next two lemmas, we consider the case where $(f_2)$ is satisfied with $q+1 < p$.

\begin{lem}\label{coer}
Assume $(W)$, $(f_1)$ and $(f_2)$ with $q+1 < p$. Then, for each $n\geq 1$, there exists a nonnegative critical point 
$u_n \in X_n\setminus\{0\}$ of $J_n$ such that
$$
J_n(u_n) = \displaystyle\inf_{X_n}J_n < 0. 
$$ 
\end{lem}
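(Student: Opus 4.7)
The plan is to obtain $u_n$ by direct minimization of $J_n$ on $X_n$. The sublinearity $q+1<p$ is what makes this variational strategy automatic: it gives both coercivity and boundedness below.

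First, I would establish coercivity. The upper bound $F(s)\le\frac{c}{q+1}s^{q+1}$ from $(f_2)$, together with the Sobolev embedding \eqref{fractsob} and H\"older's inequality on $B(0,R_n)$ (legitimate since $q+1<p<p^*_s$), yields a constant $C_n>0$ (depending on $n$ and $\|\varphi\|_{L^\infty(B(0,R_n))}$) such that
$$
\int_{B(0,R_n)} \varphi(x) F(u^+) \le C_n \|u\|_{D^{s,p}}^{q+1}, \quad u\in X_n,
$$
so $J_n(u)\ge\frac{1}{p}\|u\|_{D^{s,p}}^p - C_n\|u\|_{D^{s,p}}^{q+1}$, which is bounded below and coercive since $q+1<p$. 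Since $D^{s,p}(\R)$ is uniformly convex and $X_n$ is a closed linear subspace, $X_n$ is reflexive; combined with the weak lower semi-continuity from Lemma~\ref{lowerw}, the direct method delivers a minimizer $u_n\in X_n$ with $J_n(u_n)=\inf_{X_n}J_n$.

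Next, I would show $\inf_{X_n}J_n<0$ to force $u_n\ne 0$. For $n$ large enough that $\omega\subset B(0,R_n)$, fix $\phi\in C^\infty_c(\omega)$ with $\phi\ge 0$ and $\phi\not\equiv 0$. Using the lower bound $F(s)\ge\frac{\mu}{q+1}s^{q+1}$ from $(f_2)$ and $\varphi\ge\inf_\omega\varphi>0$ on the support of $\phi$ from $(W)$,
$$
J_n(t\phi)\le\frac{t^p}{p}\|\phi\|_{D^{s,p}}^p - \frac{\mu t^{q+1}}{q+1}\big(\inf_\omega\varphi\big)\int_\omega\phi^{q+1}.
$$
Since $q+1<p$, the negative term dominates for small $t>0$, so $J_n(t\phi)<0$, giving $\inf_{X_n}J_n<0$ and hence $u_n\ne 0$. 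As a minimizer of a $C^1$ functional on the Banach space $X_n$, $u_n$ satisfies $J'_n(u_n)=0$, and Lemma~\ref{segno} yields $u_n\ge 0$.

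No step is individually hard, but the substantive point is the last one: both $(f_2)$ with $q+1<p$ and $(W)$ are indispensable for negativity of the infimum. The sublinearity lets the test function $t\phi$ beat the $p$-order kinetic energy at small scales, while $(W)$ provides the region of positive $\varphi$ supporting $\phi$; without either, $\inf J_n$ could vanish at $u=0$ and the minimizer would be trivial.
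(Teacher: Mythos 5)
Your proposal is correct and follows essentially the same route as the paper: coercivity of $J_n$ via $(f_2)$, H\"older and the embedding \eqref{fractsob}, the direct method with the weak lower semicontinuity of Lemma~\ref{lowerw}, negativity of the infimum by testing with $t\zeta$ supported in $\omega$ using $(W)$, and nonnegativity from Lemma~\ref{segno}. The only (harmless) differences are that you estimate with $\|\varphi\|_{L^\infty(B(0,R_n))}$ where the paper uses $\|\varphi\|_{L^{\vartheta}(B(0,R_n))}$, and you take the test function nonnegative, which is in fact slightly cleaner since the functional involves $F(u^+)$.
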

\begin{proof}
By virtue of condition $(f_2)$, we have the following inequality
$$
\Int_{B(0,R_n)} \varphi(x)F(u^{+}) \leq c\Int_{B(0,R_n)}\varphi(x)|u^{+}|^{q+1}.
$$
By applying H\"older inequality with $\vartheta := \frac{p^{*}_{s}}{p^{*}_s - (q+1)}$ and $\alpha := \frac{p^{*}_{s}}{q+1}$, we obtain
\begin{align*}
\Int_{B(0,R_n)}\varphi(x)|u^{+}|^{q+1} &\leq   \Vert \varphi\Vert_{L^{\vartheta}(B(0,R_n))} \Vert u \Vert^{q+1}_{L^{p^{*}_{s}}(B(0,R_n))}\\
& =   \Vert \varphi\Vert_{L^{\vartheta}(B(0,R_n))} \Vert u \Vert^{q+1}_{L^{p^{*}_{s}}(\R)}
\leq  C_n \Vert u \Vert^{q+1}_{D^{s,p}},
\end{align*}
for some $C_n>0$. Then, by using this estimate on $J_n$, we obtain
$$
J_n(u)\geq \Frac{1}{p}\Vert u \Vert^{p}_{D^{s,p}} - C_n\Vert u \Vert^{q+1}_{D^{s,p}}.
$$
Since $q+1<p$, and recalling the definition of $\|\cdot\|_{X_n}$,
we conclude that $J_n(u)\to+\infty$ when $\|u\|_{X_n}\rightarrow \infty$, since $p > q+1$, namely $J_n$ is coercive on $X_n$.
Whence, taking into account Lemma~\ref{lowerw}, by a standard argument of the Calculus of Variations, there exists $u_n\in X_n$ such that 
$J_n(u_n) =\inf_{X_n}J_n$, which is a critical point of $J_n$. By Lemma~\ref{segno}, we have $u_n\geq 0$ a.e.\
Now, we take $\zeta \in C^{\infty}_{c}(\R)\setminus\{0\}$ with 
${\rm supp}(\zeta)\subset \omega$. Then using $(f_2)$ again, we obtain
\begin{equation*}
J_n(t\zeta)\leq   \Frac{t^{p}\|\zeta\|^{p}_{D^{s,p}}}{p} - \mu t^{q+1}\Int_{B(0,R_n)}\varphi(x)|\zeta|^{q+1}
=\Frac{t^{p}\Vert \zeta\Vert^{p}_{D^{s,p}}}{p} - \mu t^{q+1}\Int_{\omega}\varphi(x)|\zeta|^{q+1}.
\end{equation*}
Since $\inf_{\omega}\varphi >0$ we have $\int_{\omega}\varphi(x)|\zeta|^{q+1} >0$ and we can conclude that there exists 
$t_n >0$ small enough that $J_{n}(t_n\zeta) < 0.$ Since $t_n\zeta\in X_n,$ we conclude the proof.
\end{proof}

\begin{lem}\label{supremum}
Assume $(W)$, $(f_1)$ and $(f_2)$ with $q+1 <p$. 
Let, for each $n\in \mathbb{N}$, $u_n \in X_n\setminus\{0\}$ 
be the nonnegative critical point of $J_n$ obtained in Lemma~\ref{coer}.
Then there exist two constants $c<0$ and $M>0$, independent of $n$, 
such that:
\begin{itemize}
\item[(i)] $\displaystyle\sup_{n\geq 1}J_n(u_n)\leq c$.
\item[(ii)] $\displaystyle\sup_{n\geq 1} \|u_n\|_{X_n}\leq M$.
\end{itemize}
\end{lem}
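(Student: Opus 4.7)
My plan is to exploit the fixed bounded domains $\omega\subset\Omega$ appearing in condition (W) in order to obtain estimates that do not depend on $n$.

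For (i), I would fix once and for all a function $\zeta\in C^\infty_c(\R)\setminus\{0\}$ with $\mathrm{supp}(\zeta)\subset\omega$. Since $\omega\subset\Omega\subset B(0,R_n)$ for all $n$ sufficiently large, the function $\zeta$ lies in $X_n$ for every such $n$, and the very estimate already derived inside the proof of Lemma~\ref{coer} reads
$$J_n(t\zeta)\leq \frac{t^{p}\|\zeta\|^{p}_{D^{s,p}}}{p}-\mu\, t^{q+1}\int_{\omega}\varphi(x)|\zeta|^{q+1}\,dx,\qquad t>0.$$
The right-hand side is independent of $n$; since $q+1<p$ and $\int_{\omega}\varphi|\zeta|^{q+1}>0$ (because $\inf_{\omega}\varphi>0$), I can choose a single $t_0>0$, independent of $n$, for which the right-hand side equals some constant $c<0$. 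Consequently $J_n(u_n)=\inf_{X_n}J_n\leq J_n(t_0\zeta)\leq c$ for all $n$ large, which proves (i).

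For (ii), the coercivity estimate written inside the proof of Lemma~\ref{coer} is not sufficient, because the Hölder constant $C_n$ there involves $\|\varphi\|_{L^{\vartheta}(B(0,R_n))}$, which a priori may blow up with $n$. To bypass this, I would use assumption (W): since $\varphi\leq 0$ on $\R\setminus\Omega$ and $F(u_n^+)\geq 0$ by $(f_1)$, one has
$$-\int_{B(0,R_n)}\varphi(x)F(u_n^+)\,dx\geq -\int_{\Omega}\varphi(x)F(u_n^+)\,dx.$$
Applying then $(f_2)$, Hölder's inequality on the \emph{fixed} bounded set $\Omega$ (on which $\varphi\in L^{\infty}$), and the embedding \eqref{fractsob}, I obtain
$$J_n(u_n)\geq \frac{1}{p}\|u_n\|_{X_n}^{p}-C\,\|u_n\|_{X_n}^{q+1},$$
with $C>0$ depending only on $\Omega$, $\varphi$ and the structural constants, but \emph{not} on $n$. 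Combined with (i), which yields $J_n(u_n)\leq 0$, and using $p>q+1$, this immediately produces an $n$-independent bound $\|u_n\|_{X_n}\leq M$.

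The key conceptual step, and what I regard as the main subtlety, is precisely the replacement of $B(0,R_n)$ by $\Omega$ in the estimate of the nonlinear term: without invoking the sign condition (W), one would be forced to estimate $\int_{B(0,R_n)}\varphi F(u_n^+)$ using constants depending on $R_n$, and no uniform bound would follow. The two parts are essentially dual: part (i) is proved by testing the functional on a function supported in $\omega$ where $\varphi$ is positive, while part (ii) relies on discarding the contribution where $\varphi$ is nonpositive.
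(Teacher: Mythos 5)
Your proposal is correct and follows essentially the same route as the paper: a fixed test function $\zeta$ supported in $\omega$ (where $\inf_\omega\varphi>0$) gives an $n$-independent negative upper bound for $\inf_{X_n}J_n$, while the sign condition $(W)$ together with $F(u_n^+)\geq 0$ lets one discard the integral over $B(0,R_n)\setminus\Omega$ and run H\"older only on the fixed set $\Omega$, yielding $J_n(u_n)\geq \frac1p\|u_n\|^p_{D^{s,p}}-C\|u_n\|^{q+1}_{D^{s,p}}$ with $C$ independent of $n$, whence the uniform bound via $q+1<p$. This is exactly the paper's argument, including the identification of the $n$-dependence of the H\"older constant as the point to be avoided.
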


\begin{proof}
Taking into account that $u_n\geq 0$, that $\omega \subset \Omega \subset B(0,R_n)$ and by assumption $(W)$, 
$$
\Int_{B(0,R_n)}\varphi(x)F(u_n)=\Int_{\Omega}\varphi(x)F(u_n)+\Int_{B(0,R_n)\setminus\Omega}\varphi(x)F(u_n)
\leq \Int_{\Omega}\varphi(x)F(u_n).
$$
Hence, in turn, we get
\begin{equation}
\label{below-funct}
J_n(u_n) \geq  \Frac{1}{p}\|u_n \|^{p}_{D^{s,p}} - \Int_{\Omega}\varphi(x)F(u_n)
\geq   \Frac{1}{p}\|u_n \|^{p}_{D^{s,p}} - C\|u_n\|^{q+1}_{D^{s,p}},
\end{equation}
where H\"older inequality was used as in the proof of Lemma \ref{coer} but here
the positive constant $C :=\delta\Vert \varphi\Vert_{L^{\vartheta}(\Omega)}$, for some 
$\delta=\delta(\Omega)>0$, is independent of $n\geq 1$.
We also have, by arguing as in the proof of Lemma~\ref{coer}, that
for a $\zeta \in C^{\infty}_{c}(\R)\setminus\{0\}$ with 
${\rm supp}(\zeta)\subset \omega$,
$$
J_n(\tau\zeta) \leq c,\qquad c:=\Frac{\tau^{p}\Vert \zeta\Vert^{p}_{D^{s,p}}}{p} - \mu\tau^{q+1}\Int_{\omega}\varphi(x)|\zeta|^{q+1} < 0 . 
$$
for some $\tau>0$ small enough and independent of $n\geq 1$. 
Thus, we get 
$$
\sup_{n\geq 1}J_n(u_n)=\sup_{n\geq 1}\inf_{X_n}J_n\leq \sup_{n\geq 1}J_n(\tau\zeta)\leq c<0.
$$
This proves (i). By means of inequality \eqref{below-funct},
inequality (ii) immediately follows otherwise a contradiction follows by 
the condition $q+1<p$.
\end{proof}

\noindent
We now turn to the case $p<q+1$, where $\varphi$ and $f$ satisfy $(W)$ and $(f_i)$
respectively.

\begin{lem}\label{MPgeometry}
Assume that $(W)$ and  $(f_1)$-$(f_3)$ hold. 
Then there exist $\rho , r >0$ and a function $\psi\in X_n\setminus\{0\}$, 
independent of $n\geq 1$, with $\|\psi\|_{X_n} >\rho$ such that
\begin{itemize}
\item[(i)] $J_n(u)\geq r$, for every $u\in X_n$ with $\| u \|_{X_n} = \rho$ and all $n\geq 1$;
\item[(ii)] $J_n(\psi)\leq 0$, for all $n\geq 1$.
\end{itemize} 
\end{lem}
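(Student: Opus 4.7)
The proof of Lemma~\ref{MPgeometry} will follow the standard mountain pass geometry scheme, exploiting the superlinear condition $p<q+1<p^{*}_s$. The crucial point (as opposed to the sublinear case handled in Lemma~\ref{coer}) is that the constants $\rho, r, \psi$ must be chosen \emph{uniformly in $n$}; here the sign condition $(W)$, which confines the positive part of $\varphi$ to the fixed bounded set $\Omega$, is decisive.

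For part (i), I would first use the truncation and assumption $(W)$ together with $F(u^+)\geq 0$ from $(f_1)$ to write, for any $u\in X_n$,
\[
\int_{B(0,R_n)}\varphi(x)F(u^{+}) \;=\; \int_{\Omega}\varphi(x)F(u^{+}) + \int_{B(0,R_n)\setminus\Omega}\varphi(x)F(u^{+})\;\leq\; \int_{\Omega}\varphi(x)F(u^{+}).
\]
Then by $(f_2)$ we have $F(u^+)\leq \frac{c}{q+1}(u^+)^{q+1}$, and since $q+1<p^{*}_{s}$, H\"older's inequality with exponents $\vartheta=\frac{p^{*}_{s}}{p^{*}_{s}-(q+1)}$ and $\alpha=\frac{p^{*}_{s}}{q+1}$, followed by the fractional Sobolev embedding \eqref{fractsob}, yields
\[
\int_{\Omega}\varphi(x)F(u^{+}) \;\leq\; C\,\|\varphi\|_{L^{\vartheta}(\Omega)}\,\|u\|_{D^{s,p}}^{q+1},
\]
with $C=C(\Omega)$ independent of $n$ because $\Omega$ is fixed. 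Combining this with the definition of $J_n$ gives
\[
J_n(u)\;\geq\; \tfrac{1}{p}\|u\|_{X_n}^{p} - C'\|u\|_{X_n}^{q+1},
\]
and since $q+1>p$, any sufficiently small $\rho>0$ makes the right-hand side equal to a positive constant $r$, uniformly in $n$.

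For part (ii), I would fix once and for all a function $\zeta\in C^\infty_c(\R)\setminus\{0\}$ with ${\rm supp}(\zeta)\subset\omega$; by the assumption that $\omega\subset\Omega\subset B(0,R_n)$ for $n$ large, $\zeta\in X_n$ for every such $n$. Using $(f_2)$ (lower bound with $\mu$) and $(W)$ (so that $\inf_\omega\varphi>0$), one gets for $t>0$
\[
J_n(t\zeta)\;\leq\; \frac{t^{p}}{p}\|\zeta\|_{D^{s,p}}^{p} \;-\; \mu\, t^{q+1}\inf_{\omega}\varphi\,\int_{\omega}|\zeta|^{q+1}.
\]
Because $q+1>p$ and the second coefficient is strictly positive (and both coefficients are independent of $n$), we have $J_n(t\zeta)\to -\infty$ as $t\to +\infty$. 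Choose $t_0>0$ large enough (uniformly in $n$) so that both $J_n(t_0\zeta)\leq 0$ and $t_0\|\zeta\|_{D^{s,p}}>\rho$; then $\psi:=t_0\zeta$ works.

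The only delicate point, and it is not really an obstacle, is precisely the uniformity in $n$. In (i) it is secured by replacing $\int_{B(0,R_n)}$ by $\int_{\Omega}$ using $(W)$, so that the constant depends only on the fixed set $\Omega$; in (ii) it is secured by choosing the test function $\zeta$ supported inside the fixed set $\omega$, so that none of the relevant norms or integrals depend on $n$.
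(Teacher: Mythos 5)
Your proposal is correct and follows essentially the same route as the paper: the uniform lower bound $J_n(u)\geq \tfrac1p\|u\|_{D^{s,p}}^{p}-C\|u\|_{D^{s,p}}^{q+1}$ obtained by discarding the integral over $B(0,R_n)\setminus\Omega$ via $(W)$ and applying H\"older plus the Sobolev inequality \eqref{fractsob} on the fixed set $\Omega$, and a test function $t_0\zeta$ with ${\rm supp}(\zeta)\subset\omega$ so that $J_n(t_0\zeta)\to-\infty$ as $t_0\to\infty$ by $q+1>p$. The only cosmetic difference is that you enlarge $t_0$ to ensure $\|\psi\|_{D^{s,p}}>\rho$ while the paper shrinks $\rho$; both are legitimate.
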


\begin{proof}
We have, arguing as in Lemma \ref{supremum}, that for all $u\in X_n$
\begin{equation*}
J_n(u) \geq  \Frac{1}{p}\Vert u \Vert^{p}_{D^{s,p}} - C \Vert u\Vert^{q+1}_{D^{s,p}},
\end{equation*}
with $C$ independent of $n\geq 1$. Take $\rho>0$ such that $\rho^{q-p+1} < 1/2pC$. Then, if $\|u\|_{D^{s,p}} = \rho$, we obtain
$J_n(u)\geq r$, with $r:=\rho^{p}/2p >0.$ On the other hand, as in the proof of Lemma \ref{supremum}, there exists 
some $t_0 >0$ (this time large enough) independent of $n\geq 1$ such that 
$J_n(t_0\zeta) \leq 0$ and taking $\psi:= t_0\zeta$ we have $J_n(\psi)\leq 0$. Up to reducing $\rho$, 
we also get $\| \psi\|_{D^{s,p}} = t_0\| \zeta \|_{D^{s,p}} > \rho$.
\end{proof}

\noindent
By Lemma~\ref{MPgeometry}, we can define, for each $n\geq 1$,
the min-max level for $J_n$:
$$c_n:= \displaystyle\inf_{\gamma \in \Gamma_n}\max_{0\leq t \leq 1}J_n(\gamma(t)),\qquad
\Gamma_n := \left\lbrace \gamma \in C([0,1], X_n); \gamma(0)=0, \gamma(1)=\psi\right\rbrace.
$$
Using the fact that $X_n \subset X_{n+1}$ we actually have 
$$
c_1 \geq c_2 \geq \cdots \geq c_n \geq \cdots \geq r >0,
$$
so that in particular $c_n \to c$, for some $c \geq r > 0.$

\begin{lem} 
\label{PSC}
Assume that $(W)$ and  $(f_1)$-$(f_3)$ hold. Then the functional $J_n$ satisfies 
the $(PS)_c$-condition, for every $c \in \mathbb{R}$ and for all $n\geq 1$.
\end{lem}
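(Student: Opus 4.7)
\medskip
\noindent\textbf{Proof proposal.} The plan is to verify the Palais--Smale condition in three steps: establish boundedness of an arbitrary $(PS)_c$-sequence $(u_k)\subset X_n$, extract a weak limit $u$, and then upgrade weak convergence to strong convergence via the monotonicity of the fractional $p$-Laplacian operator, which I will denote $\mathcal{A}$.

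For boundedness I would form the combination $(q+1)J_n(u_k)-J_n'(u_k)(u_k)$. A direct computation gives
\begin{equation*}
(q+1)J_n(u_k)-J_n'(u_k)(u_k)=\Big(\Frac{q+1}{p}-1\Big)\|u_k\|^p_{D^{s,p}}+\Int_{B(0,R_n)}\varphi(x)\bigl[f(u_k^+)u_k^+-(q+1)F(u_k^+)\bigr].
\end{equation*}
By the first inequality in $(f_3)$ the bracket is non-positive pointwise and bounded below by $-C(u_k^+)^m$. The sign condition $(W)$ makes the contribution over $B(0,R_n)\setminus\Omega$ non-negative, while on $\Omega$ the remaining part is controlled by $C\|u_k\|_{L^m(\Omega)}^m\leq C'\|u_k\|_{D^{s,p}}^m$ via \eqref{fractsob} and local Hölder's inequality (possible since $m<p<p^*_s$). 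Since the left-hand side is bounded above by $(q+1)|c|+1+o(1)\|u_k\|_{X_n}$ as $k\to\infty$, and since $m<p<q+1$, one extracts $\sup_k\|u_k\|_{X_n}<\infty$.

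Up to a subsequence, $u_k\rightharpoonup u$ in $X_n$, $u_k\to u$ a.e.\ on $\R$ and strongly in $L^r(B(0,R_n))$ for every $1\leq r<p^*_s$, by \cite[Corollary 7.2]{DiNezza}. Weak convergence gives $\langle \mathcal{A}(u),u_k-u\rangle\to 0$, while $(f_2)$ combined with strong $L^{q+1}$-convergence on $B(0,R_n)$ (note $q+1<p^*_s$) and dominated convergence yields
\begin{equation*}
\Int_{B(0,R_n)}\varphi(x)\bigl(f(u_k^+)-f(u^+)\bigr)(u_k-u)\to 0.
\end{equation*}
Subtracting $J_n'(u_k)(u_k-u)\to 0$ and $J_n'(u)(u_k-u)\to 0$ then produces $\langle\mathcal{A}(u_k)-\mathcal{A}(u),u_k-u\rangle\to 0$.

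The last and most delicate step is to convert this monotonicity relation into norm convergence. For $p\geq 2$ this is immediate from the pointwise inequality $(|a|^{p-2}a-|b|^{p-2}b)(a-b)\geq c_p|a-b|^p$ applied with $a=u_k(x)-u_k(y)$ and $b=u(x)-u(y)$ and integrated against $|x-y|^{-N-sp}dx\,dy$. In the sub-quadratic regime $1<p<2$ one must instead use Simon's refined inequality $(|a|^{p-2}a-|b|^{p-2}b)(a-b)\geq c_p|a-b|^2(|a|+|b|)^{p-2}$ together with Hölder's inequality with exponents $2/p$ and $2/(2-p)$, the second factor being controlled by the uniform $D^{s,p}$-bound on $(u_k)$ established above. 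I expect this sub-quadratic passage to be the main technical obstacle; the boundedness step is routine once one notices that both halves of $(f_3)$ and the sign condition $(W)$ are needed to dispose of the integral outside $\Omega$, while the nonlinear term is tame thanks to the subcritical growth $q+1<p^*_s$.
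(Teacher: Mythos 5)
Your proposal is correct, and the first half (boundedness of the $(PS)_c$-sequence via the combination $(q+1)J_n-J_n'(\cdot)(\cdot)$, the first inequality in $(f_3)$, and the subcritical control $m<p<q+1$) is essentially the paper's argument; note only that on the fixed ball $B(0,R_n)$ the paper simply uses $\varphi\in L^{\infty}_{\loc}$ and a constant $C_n$ depending on $n$, so invoking $(W)$ here is not needed (it is the device used later, in the proof of Theorem~\ref{main}, to make the constant independent of $n$), and in fact only the first half of $(f_3)$ enters this step, not both. Where you genuinely diverge is the passage from weak to strong convergence: you run the $S_+$-type monotonicity argument, showing $\langle \mathcal{A}(u_k)-\mathcal{A}(u),u_k-u\rangle\to 0$ from $J_n'(u_k)\to 0$, weak convergence, and the compact embedding (with $q+1<p^*_s$), and then converting this into $\|u_k-u\|_{D^{s,p}}\to 0$ via the elementary (Simon) inequalities, split into the cases $p\ge 2$ and $1<p<2$ with the H\"older step in the subquadratic case controlled by the uniform $D^{s,p}$-bound. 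The paper instead first passes to the limit in the equation (a.e.\ convergence plus boundedness in $L^{p'}(\mathbb{R}^{2N})$ of the difference quotients and the weak convergence lemma of Kavian), identifies $u$ as a weak solution in $B(0,R_n)$, tests with $\psi=u$ and $\psi=u_j$ to get $\|u_j\|_{D^{s,p}}\to\|u\|_{D^{s,p}}$, and concludes strong convergence from weak convergence plus norm convergence in the uniformly convex space $D^{s,p}$. Your route gives strong convergence more directly and avoids the Radon--Riesz step; the paper's route has the advantage that the limit-identification argument it sets up is exactly the template reused in the proof of Theorem~\ref{main} to show that the weak limit of the $u_n$ is a distributional solution on all of $\R$. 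Either way the lemma is established.
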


\begin{proof}
Suppose now that $J_n(u_j)\to c$ and $J'_n(u_j)\to 0$ as $j\to\infty$. 
Then we can write
\begin{align}
c + o_j(1) &= \Frac{\Vert u_j\Vert^{p}_{D^{s,p}}}{p} - \Int_{B(0,R_n)}\varphi(x)F(u^{+}_j)\label{Jn},  \\
 o_j(1)\Vert u_j\Vert_{D^{s,p}} &= \Vert u_j\Vert^{p}_{D^{s,p}} - \Int_{B(0,R_n)}\varphi(x)f(u^{+}_j)u_j. \label{Jnprime}
 \end{align}
By combining these identities, we obtain
 $$
 \Big(\Frac{1}{p} - \Frac{1}{q+1} \Big)\Vert u_j\Vert^{p}_{D^{s,p}} - \Int_{B(0,R_n)}\varphi(x)\Big( F(u^{+}_j) - \Frac{f(u^{+}_j)u_j}{q+1}\Big) = c + o_j(1) +o_j(1)\|u_j\|_{D^{s,p}}.
 $$
 In turn, on account of condition $(f_3)$, we have
 \begin{align*}
 \Int_{B(0,R_n)}\varphi(x)\Big( F(u^{+}_j) - \Frac{f(u^{+}_j)u_j}{q+1}\Big) &\leq  \|\varphi\|_{L^\infty(B(0,R_n))}\Int_{B(0,R_n)}\Big|F(u^{+}_j) - \Frac{f(u^{+}_j)u_j}{q+1}\Big|\ \\
 &\leq  C_n\Int_{B(0,R_n)}|u_j|^{m} \leq C_{n}\Vert u_j \Vert^{m}_{L^{p^{*}_{s}}} \leq C_{n}\Vert u_j\Vert^{m}_{D^{s,p}}.
 \end{align*}
 Therefore, we get
 \begin{equation*}
 c + o_j(1) +o_j(1)\Vert u_j\Vert_{D^{s,p}}  \geq  \Big(\Frac{1}{p} - \Frac{1}{q+1}\Big)\Vert u_j\Vert^{p}_{D^{s,p}}- C_{n}\Vert u_j\Vert^{m}_{D^{s,p}}.
 \end{equation*}
 Since $p>m$ and $q+1>p$, this implies that there exists $C(s,n,p,q,c)>0$ such that 
 $$ 
\sup_{j\geq 1} \|u_j\|_{D^{s,p}} \leq C(s,n,p,q,c),
 $$
 namely  the sequence $(u_j)$ is bounded in $D^{s,p}(\R)$. In turn, there exists a subsequence, still denoted by 
 $(u_j)$, such that $u_j \rightharpoonup u$ in $X_n$ as $j\to\infty$. We also have that $u_j \rightarrow u$ in $L^{r}(B(0,R_n))$, 
 for any $1\leq r < p^{*}_{s}$ by the compact embedding theorem \cite[Corollary 7.2]{DiNezza}
 and $u_j(x)\to  u(x)$ for a.e. $x\in\R$. For any $\psi \in X_n$, we have
\begin{equation*}
\dint\Frac{|u_j(x) - u_j(y)|^{p-2}(u_j(x)-u_j(y))(\psi(x) - \psi(y))}{|x-y|^{N+sp}} = \Int_{B(0,R_n)}\varphi(x)f(u^{+}_j)\psi + 
\langle J_n'(u_j), \psi\rangle.
\end{equation*}
For each $\psi \in X_n$ fixed, we have by dominated convergence
$$
\Lim_{j\to \infty}\Int_{B(0,R_n)}\varphi(x)f(u^{+}_j)\psi = \Int_{B(0,R_n)}\varphi(x)f(u^{+})\psi,
$$
since there exists $\eta\in L^{q+1}(\R)$ such that $|u_j|\leq \eta$ a.e.\ and, for some $C_n>0$,
$$
|\varphi(x)f(u^{+}_j)\psi \chi_{B(0,R_n)}|\leq C_n |u^{+}_j|^{q}|\psi|\leq C_n |\eta|^{q}|\psi| \in L^1(\R),\quad\,\,\text{for all $j\geq 1$}.
$$
Now, if $p'$ is the conjugate exponent to $p$, we have 
$$
\text{the sequence\, $\left(\Frac{|u_j(x) - u_j(y)|^{p-2}(u_j(x)-u_j(y))}{|x-y|^{(N+sp)/p'}}\right)$
\, is bounded in $L^{p'}(\mathbb{R}^{2n})$}
$$
as well as 
$$
\Frac{|u_j(x) - u_j(y)|^{p-2}(u_j(x)-u_j(y))}{|x-y|^{(N+sp)/p'}} \rightarrow \Frac{|u(x) - u(y)|^{p-2}(u(x)-u(y))}{|x-y|^{(N+sp)/p'}} \ \,\,\,\,\text{a.e. in $\mathbb{R}^{2n}$}.
$$
Also, since $(\psi(x)-\psi(y))/|x-y|^{(N+sp)/p}\in L^p(\mathbb{R}^{2n})$
we have (cf.\ \cite[Lemma 4.8]{kavian}) that
$$
\dint\Frac{|u_j(x) - u_j(y)|^{p-2}(u_j(x)-u_j(y))(\psi(x) - \psi(y))}{|x-y|^{N+sp}}
$$
converges to
$$
\dint\Frac{|u(x) - u(y)|^{p-2}(u(x)-u(y))(\psi(x) - \psi(y))}{|x-y|^{N+sp}}.
$$ 
This shows that $u\in X_n$ is a weak solution in $B(0,R_n)$, namely 
\begin{equation}
\label{fissa}
\dint\Frac{|u(x) - u(y)|^{p-2}(u(x)-u(y))(\psi(x)-\psi(y))}{|x-y|^{N+sp}} 
= \Int_{B(0,R_n)}\varphi(x)f(u^{+})\psi, \quad\forall \psi \in X_n.
\end{equation}
Choosing $\psi = u$ in~\eqref{fissa} and $\psi = u_j$ in the above equation for $J_n'(u_j)$ and since for $C_n>0$,
$$
|\varphi(x)f(u^{+}_j)u_j \chi_{B(0,R_n)}|\leq C_n |u_j|^{q+1}\leq C_n |\eta|^{q+1} \in L^1(\R),\quad\,\,\text{for all $j\geq 1$},
$$
we obtain
\begin{align*}
\Vert u\Vert^{p}_{D^{s,p}} &= \Int_{B(0,R_n)}\varphi(x)f(u^{+})u = \Lim_{j\to \infty}\Int_{B(0,R_n)}\varphi(x)f(u^{+}_j)u_j\\
&=\Lim_{j\to \infty}\dint\Frac{|u_j(x) - u_j(y)|^{p}}{|x-y|^{N+sp}} = \Lim_{j\to \infty}\Vert u_j\Vert^{p}_{D^{s,p}}
\end{align*}
Since also $u_j \rightharpoonup u$, we can conclude that $u_j \to u$ in $X_n$, concluding the proof.
 \end{proof}

\noindent
We can finally state the following

\begin{lem}
\label{ex}
Assume that $(W)$ and  $(f_1)$-$(f_3)$ hold. Then, for each $n\geq 1$, the problem 
\begin{equation}
\label{BallP}
\begin{cases}
(-\Delta)^{s}_{p} u = \varphi(x)f(u), &  \text{in $B(0,R_n)$,}  \\
\noalign{\vskip3pt}
u=0,  &  \text{in $\R\setminus B(0,R_n)$,}
\end{cases}
\end{equation}
admits a nontrivial nonnegative solution $u_n\in X_n$.
\end{lem}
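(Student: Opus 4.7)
The plan is to split the argument into two cases according to the sign of $q+1-p$, since the variational structure of $J_n$ differs markedly between them. In the sublinear regime $q+1<p$, the lemma is essentially a direct consequence of Lemma~\ref{coer}: that lemma already produces a nonnegative $u_n\in X_n\setminus\{0\}$ satisfying $J_n'(u_n)=0$ and $J_n(u_n)=\inf_{X_n}J_n<0$. Since $u_n\geq 0$ a.e., the truncation $u_n^+$ coincides with $u_n$, so $f(u_n^+)=f(u_n)$, and the Euler--Lagrange identity $J_n'(u_n)v=0$ for every $v\in X_n$ is precisely the weak formulation of \eqref{BallP}.

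In the superlinear regime $p<q+1$, I would invoke the Mountain Pass Theorem of Ambrosetti--Rabinowitz. The two geometric hypotheses --- separation of $J_n$ from zero on the sphere of radius $\rho$ and existence of a point $\psi\in X_n$ outside that sphere with $J_n(\psi)\leq 0$ --- are exactly supplied by Lemma~\ref{MPgeometry}, while the Palais--Smale condition at the min-max level $c_n\geq r>0$ is guaranteed by Lemma~\ref{PSC}. The theorem then produces a critical point $u_n\in X_n$ of $J_n$ at level $J_n(u_n)=c_n>0$; since $J_n(0)=0<c_n$, we have $u_n\neq 0$. Invoking Lemma~\ref{segno}, $u_n\geq 0$ a.e.\ in $\R$, and once again $f(u_n^+)=f(u_n)$, so the critical point identity becomes
\begin{equation*}
\dint\Frac{|u_n(x)-u_n(y)|^{p-2}(u_n(x)-u_n(y))(v(x)-v(y))}{|x-y|^{N+sp}}=\Int_{B(0,R_n)}\varphi(x)f(u_n)v, \quad \forall v\in X_n,
\end{equation*}
which is exactly the weak formulation of~\eqref{BallP} on $B(0,R_n)$ with zero Dirichlet-type condition outside.

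No genuinely new analytic difficulty arises at this stage, since all the heavy lifting --- weak lower semicontinuity, coercivity or mountain-pass geometry, and compactness of Palais--Smale sequences for $J_n$ --- has been carried out in the preceding lemmas. The only point that demands care is the passage from the truncated functional $J_n$ (built with $F(u^+)$) to the actual problem~\eqref{BallP}: this passage hinges entirely on Lemma~\ref{segno}, which allows one to drop the positive-part truncation once a critical point is produced. I anticipate the main (very mild) obstacle to be an organizational one, namely ensuring that the cases $q+1<p$ and $p<q+1$ are handled in a unified statement; the borderline case $q+1=p$ does not arise because it is excluded by assumption $(f_2)$.
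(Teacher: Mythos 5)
Your superlinear argument is exactly the paper's proof: apply the Mountain Pass Theorem, with the geometry supplied by Lemma~\ref{MPgeometry}, the Palais--Smale condition by Lemma~\ref{PSC}, and nonnegativity of the critical point by Lemma~\ref{segno}. Note only that since Lemma~\ref{ex} assumes $(f_1)$--$(f_3)$, hypothesis $(f_2)$ already forces $q>p-1$, so your sublinear case $q+1<p$ is not part of this statement (the paper handles that case separately, via Lemmas~\ref{coer} and~\ref{supremum}, at the end of the proof of Theorem~\ref{main}); that extra discussion is harmless but unnecessary here.
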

\begin{proof}
By Lemmas~\ref{segno}, \ref{MPgeometry} and \ref{PSC}, the assertion follows
by the Mountain Pass Theorem.
\end{proof}

\section{Proof of Theorem~\ref{main}}  
\noindent
Consider first the case where $(W)$ and  $(f_1)$-$(f_3)$ hold. By virtue of Lemma~\ref{ex}, there exists a sequence $(u_n)\subset X_n \subset D^{s,p}(\R)$ 
 of nontrivial nonnegative weak solutions to problem~\eqref{BallP} on the exhausting balls $B(0,R_n)$, namely
 \begin{equation}
 \label{prob-n-sol}
 \dint\Frac{|u_n(x) - u_n(y)|^{p-2}(u_n(x)-u_n(y))(\psi(x) - \psi(y))}{|x-y|^{N+sp}}= \Int_{B(0,R_n)}\varphi(x)f(u_n)\psi,
 \end{equation}
 for any $\psi \in D^{s,p}(\R)$, with $\psi \equiv 0$ on $\R \setminus B(0,R_n)$. We claim that this sequence
 remains bounded in $D^{s,p}(\R)$. In fact, for every $n\geq 1$, we can write
\begin{equation*}
\Frac{\Vert u_n\Vert^{p}_{D^{s,p}}}{p} - \Int_{B(0,R_n)}\varphi(x)F(u_n)=c_n,  \quad
\Vert u_n\Vert^{p}_{D^{s,p}} - \Int_{B(0,R_n)}\varphi(x)f(u_n)u_n=0.
 \end{equation*}
By combining these identities, we obtain
 $$
 \Big(\Frac{1}{p} - \Frac{1}{q+1} \Big)\Vert u_n\Vert^{p}_{D^{s,p}} - \Int_{B(0,R_n)}\varphi(x)\Big( F(u_n) - \Frac{f(u_n)u_n}{q+1}\Big) = c_n.
 $$
 In turn, on account of conditions $(f_3)$ and $(W)$, we have
 \begin{align*}
 &\Int_{B(0,R_n)}\varphi(x)\Big( F(u_n) - \Frac{f(u_n)u_n}{q+1}\Big)=\Int_{\Omega}\varphi(x)\Big( F(u_n) - \Frac{f(u_n)u_n}{q+1}\Big)  \\
& \qquad +\Int_{B(0,R_n)\setminus\Omega}\varphi(x)\Big( F(u_n) - \Frac{f(u_n)u_n}{q+1}\Big)  
\leq \Int_{\Omega}\varphi(x)\Big( F(u_n) - \Frac{f(u_n)u_n}{q+1}\Big) \\
 & \qquad  \leq  \|\varphi\|_{L^\infty(\Omega)}\Int_{\Omega}\Big|F(u_n) - \Frac{f(u_n)u_n}{q+1}\Big|
 \leq  C\Int_{\Omega}|u_n|^{m} \leq C\Vert u_n \Vert^{m}_{L^{p^{*}_{s}}} \leq C\Vert u_n\Vert^{m}_{D^{s,p}}.
 \end{align*}
where $C=C(\Omega)$ is independent of $n\geq 1$, Therefore, we can conclude that
 \begin{equation*}
 c_1\geq c_n\geq  \Big(\Frac{1}{p} - \Frac{1}{q+1}\Big)\Vert u_n\Vert^{p}_{D^{s,p}}- C\Vert u_n\Vert^{m}_{D^{s,p}}.
 \end{equation*}
 Since $p>m$ and $q+1>p$, the claim is proved. Then, there exists a subsequence, still denoted by 
$(u_n)$, such that $u_n \rightharpoonup u$ in $D^{s,p}(\R)$ as $n\to\infty$. We also have $u_n \rightarrow u$ in $L^{r}(K)$ 
 for any bounded subset $K\subset \R$ and all $1\leq r < p^{*}_{s}$ by the compact embedding theorem  
 \cite[Corollary 7.2]{DiNezza} and $u_n(x)\to  u(x)$ for a.e.\ Arguing as in the proof of Lemma~\ref{PSC}, it follows that $u$
 is a distributional weak solution to problem \eqref{problem}.
In fact, let $\psi \in C^\infty_c(\R)$ and set $K:={\rm supt}(\psi)$. Then
$\psi \in D^{s,p}(\R)$ and $\psi \equiv 0$ on $\R \setminus B(0,R_n)$, for $n\geq 1$ large enough.
The left-hand side of \eqref{prob-n-sol} converges as in the proof of Lemma~\ref{PSC}, by means of duality arguments.
As far as the right-hand side is concerned, by dominated convergence, we get
 \begin{align*}
 \Lim_{n\to \infty} \Int_{B(0,R_n)}\varphi(x)f(u_n)\psi  
&=  \Lim_{n\to \infty} \Int_{K}\varphi(x)f(u_n)\psi  \\
&
= \Int_{K}\varphi(x)f(u)\psi= \Int_{\R}\varphi(x)f(u)\psi,
 \end{align*}
 since there exists $\eta\in L^q(K)$ such that $u_n\leq \eta$ a.e.\ in $K$ for all $n\geq 1$ and
 $$
 |\varphi(x)f(u_n)\psi|\chi_K(x)\leq C u_n^{q}\chi_K(x)\leq C \eta^q\chi_K(x) \in L^1(K).
 $$
 We will now show that $u \neq 0$. Taking $(W)$ and $(f_3)$
 into account, we deduce that
\begin{align*}
c_n &=  \Frac{1}{p}\Vert u_n\Vert^{p}_{D^{s,p}} - \Int_{B(0,R_n)}\varphi(x)F(u_n)
 =   \Int_{B(0,R_n)}\varphi(x)\Big(\Frac{f(u_n)u_n}{p} - F(u_n)\Big)\\
& =   \Int_{\Omega}\varphi(x)\Big(\Frac{f(u_n)u_n}{p} - F(u_n)\Big) +  \Int_{B(0,R_n)\setminus \Omega}\varphi(x)\Big(\Frac{f(u_n)u_n}{p}- F(u_n)\Big)\\
&\leq   \Int_{\Omega}\varphi(x)\Big(\Frac{f(u_n)u_n}{p} - F(u_n)\Big).
\end{align*} 
Since $(c_n)$ is monotone and bounded from below by $r>0$,
we have $c_n\to c>0$ as $n\to\infty$. Whence, assuming by contradiction that $u\equiv 0$, since $\Omega$ is bounded and, for all $n\geq 1$,
$$
0\leq (f(u_n)u_n - pF(u_n))\chi_\Omega\leq C u_n^{q+1}\chi_\Omega\leq C \eta^{q+1}\chi_\Omega,
$$
for some $\eta\in L^{q+1}(\Omega)$, we would obtain by dominated convergence
$$
\Lim_{n\to \infty}\Int_{\Omega}\varphi(x)\Big(\Frac{1}{p}f(u_n)u_n - F(u_n)\Big)= \Int_{\Omega}\varphi(x)\Big(\Frac{1}{p}f(u)u - F(u)\Big) = 0,
$$
which is a contradiction. Therefore $u\neq 0$ and the proof is complete under the first assumption.
If instead condition $(f_2)$  holds for $0 \leq q < p-1$, then the proof follows in a  similar way, in light of Lemma~\ref{coer} and 
Lemma~\ref{supremum}. \qed

\bigskip
\bigskip

\bigskip
\bigskip

\end{document}